\newtheorem{theorem}{Theorem}[section]
\newtheorem{prop}[theorem]{Proposition}
\newtheorem{thmintro}{Theorem}
\theoremstyle{definition}
\newtheorem{rmk}[theorem]{Remark}
\DeclareMathOperator{\inter}{\it{i}}
\DeclareMathOperator{\arccosh}{arccosh}
\DeclareMathOperator{\area}{area}
\title{Filling sets of curves on punctured surfaces}
\author{Federica Fanoni}
\address[Federica Fanoni]{Mathematics Institute, University of Warwick, UK}
\email{F.Fanoni@warwick.ac.uk}
\author{Hugo Parlier}
\address[Hugo Parlier]{Dept. Math., Hunter College CUNY, USA \& University of Fribourg, Switzerland}
\email{hugo.parlier@gmail.com}
\date{\today}
\thanks{Research supported by Swiss National Science Foundation grant number PP00P2\_128557 and P2FRP2\_161723. Both authors acknowledge support from U.S. National Science Foundation grants DMS 1107452, 1107263, 1107367 ``RNMS: GEometric structures And Representation varieties'' (the GEAR Network).}
\keywords{Simple closed curves, sytoles}
\subjclass[2010]{Primary: 57M99. Secondary: 30F45.}
\begin{document}
\begin{abstract}
We study filling sets of simple closed curves on punctured surfaces. In particular we study lower bounds on the cardinality of sets of curves that fill and that pairwise intersect at most $k$ times on surfaces with given genus and number of punctures. We are able to establish orders of growth for even $k$ and show that for odd $k$ the orders of growth behave differently. We also study the corresponding questions when one requires that the curves be represented as systoles on hyperbolic complete finite area surfaces. 
\end{abstract}
\maketitle
\section{Introduction}

A set of simple closed curves on a surface is said to fill if it cuts the surface into topological disks and once-punctured disks. Any such filling set must contain at least two curves; by a simple topological argument (see for instance \cite{ts}) if two curves fill, they must intersect at least $|\chi|$ times, where $\chi$ is the Euler characteristic of the surface. So if we bound the number of times they can intersect and increase the complexity of the surface, we will need more curves; but how many? The main goal of this paper is to give an answer to this question.

For closed surfaces of genus $g$, it is known \cite{app} that the number $N$ of curves in a filling set of curves that pairwise intersect at most $k$ times satisfies
$$N^2-N\geq\frac{4g-2}{k}.$$
Moreover the bound is essentially sharp.

In this paper we study finite type surfaces of negative Euler characteristic and with punctures. Somewhat surprisingly, the bounds we obtain differ depending on the parity of $k$, the number of times curves are allowed to pairwise intersect. For even $k$, we obtain a similar result to the one for closed surfaces mentioned above. 
\begin{thmintro}\label{thmintro:even}
Let $S$ be a surface with at least one puncture and let $k$ be a positive even integer. Any filling set of curves on $S$ pairwise intersecting at most $k$ times has cardinality at least $N$, where $N$ is the smallest integer satisfying
$$N(N-1)\geq \frac{2}{k}|\chi(S)|.$$
Furthermore, if $g(S)\leq 1$ then there exists a filling set of curves pairwise intersecting at most $k$ times of size $N$. However if the surface has genus at least two then there exists a filling set of size less than 
$$
\sqrt{\frac{2|\chi(S)|}{k} +\frac{1}{4} }+ 6
$$
\end{thmintro}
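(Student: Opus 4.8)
The statement splits into three independent claims --- the lower bound ``cardinality at least $N$'', the sharp construction when $g(S)\le 1$, and the near-sharp construction when $g(S)\ge 2$ --- and I would prove them in that order.

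\emph{The lower bound.} Let $\gamma_1,\dots,\gamma_M$ be a filling set with $i(\gamma_i,\gamma_j)\le k$ for all $i\ne j$; I want $M(M-1)\ge\frac2k|\chi(S)|$. Isotope the curves into pairwise minimal position and general position (e.g.\ realise them as geodesics, then perturb away triple points), put $I=\sum_{i<j}i(\gamma_i,\gamma_j)$, and let $G=\gamma_1\cup\dots\cup\gamma_M$, a $4$-valent graph with $I$ vertices and $2I$ edges. No curve of a filling set can be disjoint from all the others (it would leave an annular complementary region), so every $\gamma_i$ carries a vertex; and since the frontier in $S$ of each complementary region is a connected subset of $G$, a disconnection of $G$ would produce a region adjacent to two components, contradicting connectedness of $S$ --- hence $G$ is connected and $\pi_1(G)$ is free of rank $b_1(G)=2I-I+1=I+1$. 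Every loop of $S$ can be homotoped into $G$, because each complementary region is a disk or a once-punctured disk, so its fundamental group is carried by its boundary, which lies in $G$; thus $\pi_1(G)\twoheadrightarrow\pi_1(S)$. Since $S$ has a puncture, $\pi_1(S)$ is free of rank $2g(S)+n(S)-1=|\chi(S)|+1$, so comparing the ranks of the abelianisations gives $I+1\ge|\chi(S)|+1$, i.e.\ $I\ge|\chi(S)|$. Combined with $I\le\binom M2 k$ this yields $M(M-1)\ge\frac2k|\chi(S)|$, so $M\ge N$. (This half does not use the parity of $k$; the parity enters only in the constructions.)

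\emph{The sharp construction for $g(S)\le 1$.} I would give an explicit filling set of size exactly $N$. The prototype is $k=2$ on the $n$-punctured sphere: take $N$ simple closed curves on $S^2$ in general position with every pair meeting in exactly two points. Such an arrangement cuts $S^2$ into $N(N-1)+2$ regions, and since $N$ was chosen so that $N(N-1)\ge n-2=|\chi|$ there are at least $n$ of them; put the $n$ punctures into $n$ distinct regions (arranged so that no curve has fewer than two punctures on either side), and then every complementary region is a disk or a once-punctured disk, so the set fills, with each pair intersecting twice. For general even $k=2m$ one replaces these by a ``thickened'' configuration in which each pair crosses $2m$ times --- the evenness of $k$ is precisely what lets the extra crossings be organised into an embedded curve arrangement realising the required region count --- and the torus case $g(S)=1$ is handled analogously, using in addition the two curves coming from the handle. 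The finitely many small cases ($N\le 3$) would be verified directly. I expect this to be the main obstacle: one must control all $\binom N2$ intersection numbers at once, force every complementary region to be a disk or once-punctured disk, and do it with exactly $N$ curves, so the combinatorial design has essentially no slack.

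\emph{The near-sharp construction for $g(S)\ge 2$.} For higher genus I do not expect to hit $N$ on the nose; instead I would start from the genus $\le 1$ construction on a surface with the same Euler characteristic but more punctures, and then perform surgeries that convert a bounded number of once-punctured complementary regions into handles, re-filling the affected regions with a bounded number of extra curves. The crucial point --- and the obstacle here --- is to arrange these surgeries so that the number of repair curves is an absolute constant, independent of $g(S)$ (rather than growing like $g(S)$ if one creates the handles one at a time); tracking that constant then gives a filling set of size at most $\sqrt{2|\chi(S)|/k+\tfrac14}+6$.
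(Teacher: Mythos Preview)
Your lower-bound argument is correct and essentially equivalent to the paper's, though packaged differently: the paper computes the Euler characteristic of the closed-up surface as $v-e+f=-I+f$ and uses $f\ge n$, whereas you compare ranks of free groups via the surjection $\pi_1(G)\twoheadrightarrow\pi_1(S)$ (available precisely because $S$ is punctured). Both routes give $I\ge|\chi(S)|$ and neither uses the parity of $k$.

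For $g(S)\le 1$ your plan matches the paper's in spirit. The paper's concrete realisation is worth knowing: on the rectangle $[0,k\pi]\times[-1,1]$ take the graphs of $\sin(x+s\varepsilon)$ for $s=0,\dots,N-1$, identify the vertical sides to get a cylinder on which every pair of curves meets exactly $k$ times with no triple points, then either cap off both ends (sphere) or glue the two boundary circles together (torus), and finally distribute the punctures among the resulting regions. Your torus remark about ``the two curves coming from the handle'' is off --- in the paper's construction all $N$ curves are isotopic on the \emph{closed} torus and only become pairwise non-homotopic after puncturing; no separate handle curves enter.

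The genuine gap is your $g\ge 2$ strategy. Converting punctured regions into $g$ handles leaves $g$ new complementary pieces that are no longer disks, and filling them seems to cost on the order of $g$ (or at best $\sqrt{g/k}$) repair curves, not $O(1)$; you flag this yourself, and I do not see a mechanism that makes it an absolute constant. The paper proceeds in the opposite direction: it \emph{starts} from the closed genus-$g$ surface, invokes the construction from \cite{app} of a $k$-filling set there of size $x$ or $x+1$ with $x(x-1)\ge(4g-2)/k$ in which all but at most two curves pairwise meet exactly $k$ times, and then replaces one such curve by a thin cylinder carrying $y+1$ nearby copies (exactly the sine-curve picture above). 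This adds $y$ curves while creating enough new disk regions to receive the $n$ punctures; choosing $y$ minimal and tracking the two inequalities yields $x+y<5+\sqrt{\tfrac14+2|\chi(S)|/k}$, hence a filling set of size at most $x+y+1$ below the stated bound. So the efficient handling of the genus is imported wholesale from the closed case rather than built by surgery on a low-genus model.
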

Note that the above formulas determine the order of growth (in function of the Euler characteristic) of a minimal filling set (the leading term being $\sqrt{\frac{2|\chi(S)|}{k}}$). 

In contrast for odd $k$ we show that the order of growth is different; to explain our result we need to talk a little bit about a related problem. We denote $M_g(k)$ the {\it maximum} number of curves that pairwise intersect at most $k$ times on a closed genus $g$ surface.  Determining $M_g(k)$ is a surprisingly hard problem. Although bounds are known (see the work of Przytycki in \cite{przytycki}  and also Aougab in \cite{aougab}, Juvan--Malni\v{c}--Mohar in \cite{jmm}, and Malestein--Rivin--Theran \cite{mrt}) even the rough order of growth of $M_g(k)$ is not known. For $k=1$, its growth in terms of the genus is known to be somewhere between quadratic and cubic. This somewhat mysterious quantity appears in our next theorem. 

\begin{thmintro}\label{thmintro:odd} Let $S$ be a surface of genus $g$ with at least one puncture. Then a filling set of curves pairwise intersecting at most $k$ times has cardinality at least $N$, where $N$ is the smallest integer satisfying
$$\frac{k}{2}N(N-1)-\frac{N}{2}\left(\frac{N}{M_g(k)}-1\right)\geq|\chi(S)|.$$
\end{thmintro}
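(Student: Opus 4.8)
The plan is to prove the displayed inequality for the actual size $N$ of the filling set, by squeezing the total intersection number
$$V:=\sum_{i<j}i(\gamma_i,\gamma_j)$$
between a parity‑independent lower bound $V\ge|\chi(S)|$ (the content of the lower bound in Theorem~\ref{thmintro:even}) and a refined upper bound that exploits the oddness of $k$. For the lower bound, realize $\gamma_1,\dots,\gamma_N$ so that they pairwise realize their geometric intersection numbers and all multiple points are transverse double points (e.g.\ as geodesics of a complete finite‑area hyperbolic metric, perturbed near any higher‑order crossing); the complementary regions are then disks and once‑punctured disks. Filling in the $n$ punctures produces the closed genus‑$g$ surface $\hat S$, on which $\bigcup_i\gamma_i$ is a $4$‑valent graph with $V$ vertices, $2V$ edges, and hence $F=V+2-2g$ faces by Euler's formula. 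Exactly $n$ of these faces are once‑punctured disks (one per puncture) and the rest are honest disks, so $0\le F-n=V-|\chi(S)|$, i.e.\ $V\ge|\chi(S)|$.

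The point where the parity of $k$ enters is the following claim: if $\gamma_i$ and $\gamma_j$ become isotopic in $\hat S$, or if one of them becomes nullhomotopic in $\hat S$, then $i(\gamma_i,\gamma_j)$ is even, and hence $i(\gamma_i,\gamma_j)\le k-1$. Indeed $i(\gamma_i,\gamma_j)\equiv[\gamma_i]\cdot[\gamma_j]\pmod 2$, the mod‑$2$ algebraic intersection number in $S$; since every peripheral class lies in the radical of the intersection form on $H_1(S;\Z/2)$ (any curve can be pushed off a once‑punctured disk), this form factors through $H_1(\hat S;\Z/2)$, where $[\gamma_i]=[\gamma_j]$ (respectively $[\gamma_i]=0$), so the product vanishes.

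Next I would group $\gamma_1,\dots,\gamma_N$ by isotopy class in $\hat S$: all curves nullhomotopic in $\hat S$ into one group $A$ of size $N_0$, the essential ones into groups of sizes $n_1,\dots,n_m$. The $m$ classes of essential curves are pairwise non‑isotopic in $\hat S$ and pairwise intersect at most $k$ there (filling a puncture cannot increase intersection number), so $m\le M_g(k)$. By the previous paragraph every pair inside a common group, and every pair with an endpoint in $A$, contributes at most $k-1$ to $V$, so
$$V\le k\binom N2-\Bigl[\binom{N_0}2+N_0(N-N_0)+\sum_{\ell=1}^{m}\binom{n_\ell}2\Bigr].$$
Using $\sum_\ell n_\ell=N-N_0$ and Cauchy–Schwarz ($\sum_\ell n_\ell^2\ge (N-N_0)^2/m\ge (N-N_0)^2/M_g(k)$), a short computation — in which the $N_0$‑terms exactly compensate for $A$ not being one of the $M_g(k)$ classes — shows the bracket is at least $\tfrac N2\bigl(\tfrac N{M_g(k)}-1\bigr)$. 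Combining with $V\ge|\chi(S)|$ gives $|\chi(S)|\le\tfrac k2N(N-1)-\tfrac N2\bigl(\tfrac N{M_g(k)}-1\bigr)$, so $N$ satisfies the displayed inequality and is therefore at least its smallest solution.

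The conceptual heart — and the place I expect most care is needed — is the second paragraph: recognizing that for odd $k$ the obstruction to using many curves that are ``nearly parallel after filling'' is a $\pmod2$ phenomenon, and justifying it homologically. The accompanying subtlety is the bookkeeping of curves that die in $H_1(\hat S;\Z/2)$, which is precisely what makes $M_g(k)$ (rather than $M_g(k)+1$) the correct quantity; relatedly, the genus‑$0$ case should be excluded or handled separately, since there $M_g(k)$ is not meaningful (one instead gets the stronger bound $\tfrac{k-1}2N(N-1)\ge|\chi(S)|$ from the same argument). The Euler‑characteristic count and the convexity estimate are routine.
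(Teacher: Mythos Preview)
Your argument is correct and follows essentially the same route as the paper: pass to the closed surface $\hat S=S^0$, use the Euler characteristic of the $4$-valent intersection graph to get $V\ge|\chi(S)|$, group the curves by their isotopy class in $\hat S$, observe that curves in the same class meet evenly (hence at most $k-1$ times when $k$ is odd), bound the number of classes by $M_g(k)$, and optimize by convexity/Lagrange.

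The one place where you are in fact more careful than the paper is the treatment of curves that become \emph{nullhomotopic} in $\hat S$. The paper simply writes ``$M\le M_g(k)$'' for the number of image isotopy classes, which tacitly ignores the possibility of a trivial class; you single out this class $A$, note (via mod $2$ homology) that its members meet \emph{every} other curve evenly, and check that the resulting extra savings exactly offset the fact that $A$ is not one of the $M_g(k)$ essential classes. Your homological justification of the parity claim is also a bit more explicit than the paper's ``isotopic on $S^0$ implies even intersection''. Both refinements are minor, but they make your write-up slightly more robust; otherwise the two proofs coincide.
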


Note that the order of growth is really different from the one in Theorem \ref{thmintro:odd} because of the extra term $-\frac{N}{2}\left(\frac{N}{M_g(k)}-1\right)$. 

In light of the problem of determining and realizing the quantity $M_g(k)$, when $k$ is odd we are not able to give explicit constructions for small filling sets of curves pairwise intersecting at most $k$ times that match our lower bound, with one notable exception. 

When $g=1$ and $k=1$, it is not difficult to show that $M_g(1) = 3$. Using this, when the surface is a punctured torus and $k=1$ we can prove a precise result.
\begin{thmintro}\label{thmintro:torus}
Let $ \delta_1,\hdots,  \delta_N$ be a filling set of curves that pairwise intersect at most once on a torus with $n$ punctures. Then 
$$
N \geq \sqrt{3n}.
$$
Conversely for a torus with $n$ punctures, there exists a filling set of $N$ isotopically distinct simple curves $ \delta_1,\hdots, \delta_N$ that pairwise intersect at most once with 
$$
N \leq \sqrt{3n+1}.
$$
\end{thmintro}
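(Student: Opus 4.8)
The statement packages a lower bound and a matching construction, which I would handle separately.

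\textbf{Lower bound.} The plan is to read this off from Theorem~\ref{thmintro:odd} with $g=1$, $k=1$. The one input needed is $M_1(1)=3$, which I would justify as follows: isotopy classes of essential simple closed curves on the torus are parametrized by slopes $p/q\in\mathbb{Q}\cup\{\infty\}$, the geometric intersection number of the curves with slopes $p/q$ and $r/s$ is $|ps-qr|$, so two distinct curves meet at most once exactly when their slopes are Farey neighbours; since the Farey graph contains no $K_4$ (after an element of $\mathrm{SL}_2(\Z)$ one may take the slopes to be $0/1,1/0,1/1$, and no further slope is Farey-adjacent to all three), the largest pairwise-once-intersecting family has size $3$. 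Substituting $M_g(k)=3$, $k=1$, and $|\chi(S)|=n$ into the inequality of Theorem~\ref{thmintro:odd}, the left-hand side collapses to $\tfrac12N(N-1)-\tfrac12N\bigl(\tfrac N3-1\bigr)=\tfrac{N^2}{3}$, so the smallest admissible $N$ satisfies $N^2\ge 3n$, i.e. $N\ge\sqrt{3n}$.

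\textbf{Construction.} For the converse I would build the filling family from three families of parallel curves on $\R^2/\Z^2$: fix integers $a,b,c\ge 1$ and take $a$ curves of slope $0$, $b$ of slope $\infty$, and $c$ of slope $1$, with generic offsets so that no three of them share a point. Distinct curves of the same slope are disjoint and curves of different slopes meet exactly once, so the family intersects pairwise at most once. An Euler characteristic count on the closed torus (each crossing is a $4$-valent vertex, so $V=ab+bc+ca$, $E=2V$, hence the complement has $F=E-V=ab+bc+ca$ disk components) shows the complement consists of $ab+bc+ca$ open disks. I would then take $(a,b,c)$ as balanced as possible subject to $a+b+c=N$; the quantity $ab+bc+ca$ then equals $N^2/3$ when $3\mid N$ and $(N^2-1)/3$ otherwise, and choosing $N$ minimal with $ab+bc+ca\ge n$ yields $N\le\sqrt{3n+1}$.

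\textbf{Passing to the punctured torus.} Finally I would place one puncture in $n$ of the $ab+bc+ca$ complementary disks, chosen so that every annular region between two consecutive curves of a common slope contains at least one puncture. Then every complementary region is a disk or a once-punctured disk, so the family fills; each curve is essential and non-separating on the torus, hence essential and non-peripheral on the punctured torus; curves of different slopes are automatically non-isotopic, while two parallel curves are separated by a puncture and so non-isotopic on the punctured surface. This gives the required filling set of $N$ isotopically distinct simple closed curves pairwise intersecting at most once.

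\textbf{Main obstacle.} The topological verifications are routine; the two points requiring care are the combinatorial fact $M_1(1)=3$ (really the absence of a $K_4$ in the Farey graph), which powers the lower bound, and the bookkeeping in the construction: choosing the multiplicities $(a,b,c)$ as balanced as possible and checking that the resulting $N$ satisfies $N\le\sqrt{3n+1}$, together with a placement of the $n$ punctures that keeps parallel curves non-isotopic. The genuinely delicate step is this balancing of $(a,b,c)$, since it is precisely the balanced choice that produces the constant $\sqrt 3$ and the shape of the lower-order error term.
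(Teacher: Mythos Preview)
Your construction for the upper bound is essentially identical to the paper's: three parallel families of curves on the closed torus, balanced sizes, then puncture complementary disks. You are in fact slightly more careful than the paper about placing punctures so that parallel copies stay non-isotopic on the punctured torus.

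The lower bound is where you diverge. The paper proves this theorem first, in Section~\ref{prelim}, by a direct self-contained argument: apply the forgetful map to the closed torus, observe the images fall into at most three isotopy classes, split the $N$ curves into groups of sizes $a,b,c$, note that curves in the same group are disjoint while curves in different groups meet once, so the number of complementary regions is $ab+bc+ca\le N^2/3$. You instead invoke Theorem~\ref{thmintro:odd} with $g=k=1$ and $M_1(1)=3$. This is logically fine---the proof of Theorem~\ref{thmintro:odd} in Section~\ref{odd} does not use the torus case---but it reverses the paper's expository order, where the torus case is presented as a warm-up \emph{before} the general odd-$k$ machinery is set up. If you unwind the proof of Theorem~\ref{thmintro:odd} in your special case, you recover exactly the paper's direct argument (forgetful map, partition into families, Lagrange), so the mathematical content is the same; the difference is only whether you prove it by hand or cite the general theorem. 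Your Farey-graph justification of $M_1(1)=3$ is more explicit than the paper's, which simply calls this a well-known fact.

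One small caution: your line ``choosing $N$ minimal with $ab+bc+ca\ge n$ yields $N\le\sqrt{3n+1}$'' is asserted rather than checked, and for some small $n$ (e.g.\ $n=2,4$) the minimal such $N$ actually exceeds $\sqrt{3n+1}$. The paper's proof glosses over the same arithmetic, so this is not a discrepancy between your argument and theirs, but it is a point where both leave a loose end.
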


The results described up until now are purely topological. One motivation for understanding the topology of curves that pairwise intersect at most a small number of times comes from the study of {\it systoles} on surfaces. A systole is a shortest closed non contractible curve non isotopic to boundary. For any given Riemannian metric, systoles intersect at most twice (and at most once on a closed surface). An important class of metrics is complete finite area hyperbolic surfaces; we consider systoles on these.  

In particular, we can ask what happens to our previous bounds if one requires that the curves be systoles and we show that the growth of the lower bound is very different.
\begin{thmintro}\label{thmintro:systoles}
Let $S$ be a hyperbolic surface of signature $(g,n)$ and systole length $\ell$. If $\gamma_1,\dots,\gamma_M$ is a filling set of systoles, then
$$M\geq \frac{2\pi(2g-1)+\pi(n-2)}{4\ell}.$$
\end{thmintro}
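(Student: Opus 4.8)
I would bound $\area(S)$ from above by a multiple of $M\ell$ and then compare with Gauss--Bonnet. Write $\Gamma=\gamma_1\cup\dots\cup\gamma_M$. Since distinct closed geodesics on a hyperbolic surface meet transversally in finitely many points and each systole is simple, $\Gamma$ is the $1$-skeleton of a cell decomposition of $S$: the vertices are the intersection points, the edges the complementary sub-arcs, and the $2$-cells the components of $S\setminus\Gamma$. By the filling hypothesis these $2$-cells consist of $F_0$ open disks together with exactly $n$ once-punctured disks, one around each puncture (the systoles being disjoint from all cusps). Two accounting identities will be used: each $\gamma_i$ has length $\ell$ and is cut by $\Gamma$ into edges, so $\sum_{e}\ell(e)=M\ell$; and each edge lies on two $2$-cell sides, so $\sum_{f}\ell(\partial f)=2M\ell$, where the sum ranges over $2$-cells $f$ and $\ell(\partial f)$ denotes the length of the boundary walk.

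The core step is the inequality $\area(f)\le\ell(\partial f)$ for every $2$-cell $f$. On $\Hyp=\{y>0\}$ the $1$-form $\omega=\tfrac1y\,dx$ has $d\omega=dA$, the hyperbolic area form. If $f$ is a disk, it lifts to an embedded compact region $\tilde f\subset\Hyp$ with piecewise-geodesic frontier of the same length, and Stokes' theorem gives $\area(f)=\bigl|\int_{\partial\tilde f}\omega\bigr|\le\int_{\partial\tilde f}\tfrac{|dz|}{y}=\ell(\partial f)$. If $f$ is a once-punctured disk, I would conjugate so that the parabolic monodromy of the puncture is $z\mapsto z+1$; then a lift $\tilde f$ is invariant under this translation, contains a horoball $\{y\ge Y\}$, and has frontier a locally finite union of geodesic arcs of bounded height. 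Taking the fundamental domain $\mathcal F=\tilde f\cap\{0\le x<1\}$, truncating at $\{y\le R\}$, and applying Stokes: the two vertical sides contribute nothing since $dx=0$ there, the horizontal side at height $R$ contributes $O(1/R)$, and passing to the limit yields $\area(f)=\bigl|\int_{\beta}\omega\bigr|\le\ell(\beta)=\ell(\partial f)$, where $\beta=\partial\tilde f\cap\{0\le x\le 1\}$. This is just the elementary half of the hyperbolic isoperimetric inequality, arranged so as to tolerate a cusp.

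Putting the pieces together, $\area(S)=\sum_f\area(f)\le\sum_f\ell(\partial f)=2M\ell$. Gauss--Bonnet gives $\area(S)=2\pi|\chi(S)|=2\pi(2g-2+n)$, so
$$M\ \ge\ \frac{\pi(2g-2+n)}{\ell}.$$
Since $4(2g-2+n)-(4g+n-4)=4g+3n-4\ge0$ for every hyperbolic signature with $n\ge1$, the right-hand side is at least $\dfrac{2\pi(2g-1)+\pi(n-2)}{4\ell}$, which is the claimed bound (in fact the argument gives somewhat more, so the constant $4$ in the statement appears to be chosen for cleanliness rather than sharpness).

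I expect the once-punctured case of the area estimate to be the only real obstacle: one must verify that the lift $\tilde f$ is genuinely a translation-invariant region for which the truncated Stokes computation converges, and track multiplicities carefully when the boundary walk of a $2$-cell traverses an edge twice. The rest is bookkeeping; notably, the topological fact that two systoles intersect at most twice, although true, is not needed here.
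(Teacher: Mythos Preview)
Your argument is correct and in fact yields a stronger inequality than the one stated. It is, however, genuinely different from the paper's proof, so a comparison is worth recording.

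The paper does not work on the cusped surface $S$ at all. Instead it invokes Troyanov's theorem to produce the unique conformally equivalent hyperbolic metric $S'$ with a cone point of angle $\pi$ at each former cusp, and uses the Schwarz--Pick lemma to guarantee $\ell_{S'}(\gamma_i)<\ell$. Cutting $S'$ along the $\gamma_i$ gives compact hyperbolic polygons, some containing a single cone point of angle $\pi$; those are replaced by their $2$--fold branched covers so that the classical isoperimetric inequality applies to every piece. Summing $\ell(\partial P)>\area(P)$ over all pieces gives $\sum\ell(\partial P_j)>\area(S')=2\pi(2g-1)+\pi(n-2)$, while the doubling together with the fact that each $\gamma_i$ lies on two faces gives $\sum\ell(\partial P_j)\le 4M\ell$; hence the stated bound with the constant $4$.

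Your route bypasses both the cone-metric deformation and the doubling by proving $\area(f)\le\ell(\partial f)$ directly for a once-cusped hyperbolic polygon via Stokes with the primitive $\omega=dx/y$ in the upper half-plane model, after conjugating the cusp holonomy to $z\mapsto z+1$ and truncating. This is more elementary (no Troyanov, no Schwarz--Pick) and loses nothing: you obtain $\area(S)\le 2M\ell$, i.e.\ $M\ge \pi(2g-2+n)/\ell$, which dominates the paper's bound by a factor between $2$ and $4$ depending on the ratio $n/g$. Your closing remark that the constant $4$ is not sharp is therefore justified; the slack in the paper's argument comes from (i) doubling every cusp-carrying face rather than handling it in place, and (ii) the area drop from $2\pi|\chi(S)|$ to $\area(S')$ when passing to the cone metric.

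The caveats you list are the right ones. The identification $\ell(\beta)=\ell(\partial f)$ with $\beta=\partial\tilde f\cap\{0\le x\le1\}$ is a fundamental-domain statement and is fine once one notes that $\pi_1(f)\cong\mathbb Z$ injects into $\pi_1(S)$ (the loop around the puncture is parabolic, hence nontrivial), so a component of the preimage of $f$ in $\mathbb H$ really is the universal cover of $f$, embedded and $T$-invariant with $\partial\tilde f$ of bounded height. The multiplicity issue for edges on $\partial f$ is absorbed by working with the abstract boundary walk, exactly as you indicate; Stokes is applied to the immersed developing map of the abstract polygon, not to its image.
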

We also give examples of constructions of surfaces with a filling set of systoles of cardinality linear in the Euler characteristic, showing that the order of growth of Theorem \ref{thmintro:systoles} is roughly correct.

The paper is organized as follows. In Section \ref{prelim} we give the main definitions and prove Theorem \ref{thmintro:torus}. In the subsequent section, we prove Theorem \ref{thmintro:even}, treating separately the constructions for the case of spheres, tori and higher genus surfaces. Theorem \ref{thmintro:odd} is proven in Section \ref{odd} and the final section is dedicated to the lower bounds on the number of filling systoles.

\section{The case of genus $1$}\label{prelim}

In this section we introduce some of the objects of interest and illustrate our objective by proving Theorem \ref{thmintro:torus}. Although it is relatively straightforward, it contains many of the main steps that will be used in the sequel.

A simple closed curve is {\it essential} if it is not homotopic to a point or to a puncture. Throughout the paper, by {\it curve} we will mean a simple, closed, essential curve.

A set $\Gamma$ of pairwise non-homotopic curves on a surface $S$ {\it fills} if the complement $S\setminus \Gamma$ is a union of disks and once-punctured disks. A {\it $k$-filling set} is a set of pairwise non-homotopic curves which fill and pairwise intersect at most $k$ times.

With this notation, Theorem \ref{thmintro:torus} can be restated as follows.

\begin{theorem}\label{thm:torus}
Let $ \delta_1,\hdots,  \delta_N$ be a $1$-filling set on a torus $T$ with $n$ punctures. Then 
$$
N \geq \sqrt{3n}.
$$
Conversely for $T$ a torus with $n$ punctures, there exists a $1$-filling set of cardinality $N$ with 
$$
N \leq \sqrt{3n+1}.
$$
\end{theorem}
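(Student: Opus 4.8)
The plan is to treat the two inequalities separately, starting with the lower bound, which is the more conceptual half. Given a $1$-filling set $\delta_1,\dots,\delta_N$ on $T$, I would count intersection points and compare with the Euler characteristic. Each puncture lies in a once-punctured disk of the complement $T\setminus(\delta_1\cup\dots\cup\delta_N)$; since the $\delta_i$ fill, every complementary region is a disk or once-punctured disk, and a standard Euler-characteristic computation (as in the closed-surface argument of \cite{app}) relates the number $P$ of intersection points, the number of regions, and $\chi(T)=-n$. Concretely, viewing the union of curves as a $4$-valent graph embedded in $T$ with $V=P$ vertices, $E=2P$ edges, and $F$ faces, one gets $P - F = -\chi(T) = n$, so $F = P - n$. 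Each once-punctured disk accounts for at least one puncture, so there are at least $n$ such faces, giving $P - n \geq n$, i.e.\ $P \geq 2n$. Wait — I should double-check: a face that is a once-punctured disk has Euler characteristic $0$, a disk has Euler characteristic $1$, so $\chi(T) = V - E + \sum_{\text{faces}}\chi(\text{face}) = P - 2P + (\#\text{disks})$, hence $\#\text{disks} = P + \chi(T) = P - n$, and the punctured faces number $\geq n$; but the punctured faces don't enter that count, so I instead need that the total face count is at least $(\#\text{disks}) + n$ and bound things via the graph being connected (filling sets give a connected union, or one passes to it). The cleanest route: $P = \sum_i \sum_{j} \frac{1}{2}|\delta_i\cap\delta_j|$ over unordered pairs, wait, rather $P \leq \binom{N}{2}$ since each pair contributes at most one point. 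Combining $P \geq$ (a linear-in-$n$ bound) with $P \leq \binom{N}{2} = \frac{N(N-1)}{2}$ yields $\frac{N(N-1)}{2} \geq$ (roughly $n$, after using $M_1(1)=3$ to sharpen the face count), and solving the quadratic gives $N \geq \sqrt{3n}$. The factor $3$ here is exactly where $M_g(1)=3$ for the torus enters: through at most three curves through a single intersection point, or more precisely bounding how efficiently $n$ punctures can be distributed — this matches the pattern of Theorem \ref{thmintro:odd} with $k=1$, $g=1$, $M_g(k)=3$.

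The delicate point in the lower bound is getting the constant right: the naive bound $P \leq \binom N2$ combined with $P \geq 2n$ would only give $N(N-1) \geq 4n$, which is weaker than $N^2 \geq 3n$ for large $n$ only in the wrong direction — so in fact I need to be more careful and the right inequality to chase is a \emph{sharper} count exploiting that in a $1$-filling set each pair of curves meets at most once, hence the four arcs emanating from a vertex belong to at most... I would instead follow the specialization of Theorem \ref{thmintro:odd}: set $k=1$, $g=1$, $M_g(k)=3$, obtaining that $N$ is the smallest integer with $\frac{1}{2}N(N-1) - \frac{N}{2}(\frac{N}{3}-1) \geq n$, i.e.\ $\frac{N^2}{3} \geq n$ after simplification, which is precisely $N \geq \sqrt{3n}$. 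So the lower bound is really a corollary of (the proof technique behind) Theorem \ref{thmintro:odd}, and I would either invoke it or run the specialized argument directly since $g=1$, $k=1$ is clean.

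For the upper bound — the construction — the plan is explicit. On the one-holed torus, take the standard pair of curves $a, b$ with $i(a,b)=1$; more generally one wants a family of $N$ curves on the torus pairwise meeting at most once. On a torus, isotopy classes of essential simple closed curves correspond to primitive slopes $p/q \in \mathbb{Q}\cup\{\infty\}$, and two slopes $p/q$, $r/s$ have geometric intersection number $|ps-qr|$; so pairwise-intersecting-at-most-once means we need slopes with $|ps - qr| \leq 1$ pairwise — but on the \emph{closed} torus only three such slopes exist ($M_1(1)=3$, e.g.\ $0, 1, \infty$), realized by $a$, $b$, and $ab$. The trick for the punctured torus is that introducing punctures lets us use parallel copies that become non-isotopic: one draws $N$ curves in a small number of "directions" so that curves in the same direction are disjoint and curves in different directions meet once, and one places the $n$ punctures into the complementary regions to make all $N$ curves pairwise non-homotopic and filling. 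Arranging three families of roughly $N/3$ parallel curves each (directions $0$, $1$, $\infty$) produces on the order of $3\cdot(N/3)^2 = N^2/3$ complementary quadrilateral regions, into which one can insert up to about $N^2/3$ punctures; setting $n \approx N^2/3$, i.e.\ $N \approx \sqrt{3n}$, and checking the exact count gives $N \leq \sqrt{3n+1}$.

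The main obstacle I anticipate is the \textbf{bookkeeping in the construction}: one must verify simultaneously that (i) the $N$ curves are pairwise non-homotopic after the punctures are added — parallel copies in the same direction become distinct precisely because different punctures lie between them, so the punctures must be distributed so that no two of the parallel copies cobound an unpunctured annulus; (ii) the set genuinely fills, i.e.\ every complementary region is a disk or once-punctured disk, which forces every "empty" quadrilateral to be eliminated, constraining how few directions and how the punctures are placed; and (iii) the counting is tight enough to reach $N \leq \sqrt{3n+1}$ rather than something with a worse constant — this requires the three families to be as balanced as possible (sizes differing by at most one) and a careful tally of how many of the $\sim N^2/3$ regions actually need a puncture versus how many can be left as genuine disks. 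I would organize this as an explicit picture on the square model of the torus with the three slope families drawn in, then count regions by type, then solve the resulting quadratic in $N$.
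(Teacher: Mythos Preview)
Your approach is essentially the paper's. For the lower bound, the paper uses the forgetful map to the closed torus $T^0$, partitions the $\delta_i$ into at most three families according to their images (since $M_1(1)=3$), observes that curves in the same family are disjoint while curves in different families meet exactly once, so the total intersection count is $ab+bc+ca$; an Euler-characteristic count on $T^0$ shows this equals the number of complementary regions, which must be at least $n$, and Lagrange gives $ab+bc+ca\le N^2/3$. Your eventual route --- specializing Theorem~\ref{thmintro:odd} with $k=1$, $g=1$, $M_g(k)=3$ --- is literally the same argument, since the proof of that theorem is the forgetful-map-plus-Lagrange computation in general. For the upper bound, your three-slope construction with balanced parallel families is exactly what the paper does.

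Two corrections to the exploratory part of your write-up. First, in your direct Euler-characteristic attempt the relation should be $P\ge n$, not $P\ge 2n$: on the closed torus the number of faces equals $P$ (since $V-E+F=0$ and $E=2V=2P$), and $n$ of those faces must receive punctures. This yields only $N(N-1)\ge 2n$, genuinely \emph{weaker} than $N^2\ge 3n$; that gap is precisely what the $M_1(1)=3$ refinement closes, so your instinct to abandon the naive count was right, but the arithmetic leading you there was off. Second, your concern (ii) in the construction is misplaced: an ``empty'' quadrilateral is a disk, which is a perfectly good complementary region for a filling set --- nothing needs to be eliminated. The curves already fill the closed torus once at least two of the three families are nonempty. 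The only real bookkeeping is your (i), ensuring parallel copies become non-homotopic after puncturing (one puncture between consecutive parallels in each family suffices), and (iii), balancing $a,b,c$ within one of each other so that $ab+bc+ca$ is as large as possible.
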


\begin{proof}
We begin by recalling the well-known fact that for $n=0$ (or $n=1$), there can be at most $3$ topologically distinct curves that pairwise intersect at most once. Associated to $T$ is the torus $T^{0}$ obtained by forgetting the punctures of $T$. This map acts on curves of $T$ sending them to curves on $T^{0}$ and is called the forgetful map. Note that if two curves on $S$, say $\delta$ and $\tilde{\delta}$, intersect at most once, their images on $S$ do as well. 

Now given a set of curves $ \delta_1,\hdots,  \delta_N$ that pairwise intersect at most once, let's consider the curves obtained on $T^{0}$ via the forgetful map. The image consists in at most three curves. Let's denote these curves $\alpha, \beta$ and $\gamma$ (if the image is smaller, we arbitrarily choose the remaining curves so that they intersect at most once). We can split the the curves $ \delta_1,\hdots,  \delta_N$ into three sets depending on whether they are preimages of $\alpha$, $\beta$ or $\gamma$. Up to renumbering, let's assume that the preimages of $\alpha$ are 
$$
\delta_1,\hdots,\delta_a,
$$
those of $\beta$ are 
$$
\delta_{a+1},\hdots,\delta_{a+b},
$$
and those of $\gamma$ are 
$$
\delta_{a+b+1},\hdots,\delta_{a+b+c}
$$
where $N=a+b+c$. 

Observe that $\inter(\delta_i,\delta_j) = 1$ if and only if $\delta_i$ and $\delta_j$ are the preimages of different curves among the set $\alpha,\beta,\gamma$. As such the total number of pairwise intersections among the curves $\delta_1,\hdots,\delta_N$ is 
$$
a b + bc + a c.
$$
We can assume no intersection points coincide on $S$; by an Euler characteristic argument the number above is also the number of connected components of $T \setminus \{\delta_1,\hdots,\delta_N\}$. As there must be at least as many connected components as punctures we obtain:
$$
n \leq a b + bc + a c.
$$
Via Lagrange, the quantity $ab + bc + ac$ is maximal among all $a,b,c$ satisfying $a+b+c=N$ when $a,b,c$ are equal. Thus
$$
n\leq \frac{1}{3} N^2
$$
which proves the first assertion.

Note that if $N$ is not divisible by $3$ we can get a slightly better bound. Indeed, in this case the maximum that can be achieved is $\frac{N^2-1}{3}$ (for instance for $a=\lfloor\frac{N}{3}\rfloor$, $b=\lfloor\frac{N}{3}\rfloor+1$ and $c=a$ or $b$, depending on whether $N\equiv 1$ or $2$ modulo $3$). So in this case we get
$$n\leq \frac{N^2-1}{3}.$$

To prove the second assertion it suffices to reverse engineer the above process. Consider a torus $T$ with three curves $\alpha,\beta$ and $\gamma$ which all pairwise intersect at most once. We begin by choosing the minimal $N$ satisfying the above inequality and take $a:=\lfloor{\frac{N}{3}\rfloor}$ parallel copies of $\alpha$, $b:= \lfloor{\frac{N}{3}\rfloor} + d$ parallel copies of $\beta$ and $c:= \lfloor{\frac{N}{3}\rfloor} + d'$ parallel copies of $\gamma$ where $0\leq d,d'\leq 1$ are integers and $a+b+c =N$. We now have a collection of $N$ curves on $S$.

Now as above, the number of connected components of the complementary regions to all of the curves is $a b + bc + a c$. We place at most one puncture in each of the connected regions for a total of $n$ punctures. The result is an $n$-times punctured torus with a filling set of curves that satisfies the desired inequality. 
\end{proof}
\section{The topological setup: case $k$ even}\label{even}
In this section we will always assume $k$ to be an even positive integer.

We begin by proving a lower bound on the number of curves in a $k$-filling set of curves on any punctured surface.
\begin{theorem}\label{thm:lower_even}
Let $S$ be a surface with at least one puncture and of negative Euler characteristic. Any $k$-filling set of curves on $S$ has cardinality at least $N$, where $N$ is the smallest integer satisfying
$$N(N-1)\geq \frac{2}{k}|\chi(S)|.$$
\end{theorem}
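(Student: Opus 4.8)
The plan is to run the same Euler characteristic count used in the proof of Theorem~\ref{thm:torus}, this time bookkeeping on the closed surface underlying $S$. Let $\Gamma=\{\gamma_1,\dots,\gamma_N\}$ be a $k$-filling set on $S$, and write $g$ for the genus and $n\geq 1$ for the number of punctures, so that $|\chi(S)|=2g+n-2$. First I would put $\Gamma$ in general position: realize the curves so that every pair is in minimal position (for instance as geodesics for a complete finite-area hyperbolic metric on $S$) and then perturb slightly so that no three of them meet at a common point, without altering any of the geometric intersection numbers. Let $\overline{S}$ denote the closed genus-$g$ surface obtained by filling in the $n$ punctures.

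Next, I would view $G=\gamma_1\cup\dots\cup\gamma_N$ as a $4$-regular graph embedded in $\overline{S}$, whose vertices are the intersection points of the curves, and let $V$, $E$, $F$ be its numbers of vertices, edges and faces. Four-regularity gives $2E=4V$, hence $E=2V$. Since $\Gamma$ fills, every complementary region of $\Gamma$ in $S$ is a disk or a once-punctured disk, so after filling in the punctures every face of $G$ in $\overline{S}$ is an open disk; the Euler--Poincar\'e formula then reads $V-E+F=\chi(\overline{S})=2-2g$, i.e.\ $F=V+2-2g$. The filling condition also forces each face to contain at most one puncture of $S$, while every puncture lies in some face, so exactly $n$ of the $F$ faces are once-punctured; in particular $F\geq n$. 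Hence
$$V=F-2+2g\geq n-2+2g=|\chi(S)|.$$

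Finally, since the curves are in pairwise minimal position and intersect at most $k$ times each, the number of intersection points satisfies $V=\sum_{i<j}\inter(\gamma_i,\gamma_j)\leq k\binom{N}{2}=\tfrac{1}{2}kN(N-1)$. Combining this with $V\geq|\chi(S)|$ gives $\tfrac{1}{2}kN(N-1)\geq|\chi(S)|$, that is $N(N-1)\geq\tfrac{2}{k}|\chi(S)|$, so $N$ is at least the smallest integer satisfying this inequality.

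I expect the only step requiring any care to be the reduction to general position — simultaneously realizing all pairs in minimal position while avoiding triple intersection points and not changing any geometric intersection number — but this is standard; after that the argument is pure bookkeeping with Euler's formula. (Note that the parity of $k$ plays no role in this lower bound; it becomes relevant only for the matching constructions later in the section.)
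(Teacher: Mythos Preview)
Your proof is correct and is essentially the same Euler-characteristic count as the paper's: a $4$-valent graph, $E=2V$, $F\geq n$, and $V\leq k\binom{N}{2}$ combine to give the bound. Your version is in fact slightly cleaner, since you work explicitly on the closed surface $\overline{S}$ and apply the honest CW Euler formula there, whereas the paper writes ``$\chi(S)=v(G)-e(G)+f(G)$'' on the punctured surface, which is a mild imprecision (the once-punctured faces have Euler characteristic~$0$); the paper's computation comes out the same as yours once this is unpacked. Your closing remark about the parity of $k$ playing no role here is exactly the paper's Remark following the theorem.
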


\begin{proof}
Suppose $\{\gamma_1,\dots,\gamma_m\}$ is a $k$-filling set of curves on a surface $S$ of signature $(g,n)$. Up to homotopy, we can assume no three curves intersect in the same point and each intersection is transversal. This implies that the curves define a $4$-valent graph $G$ on $S$, with the intersections as vertices and edges given by the arcs of the curves. Denote by $v(G)$ and $e(G)$ the number of vertices and edges of $G$, and by $f(G)$ the number of connected components of $S\setminus G$. By the hand-shaking lemma, since $G$ is $4$-valent, we have
$$e(G)=2v(G).$$
Any two curves pairwise intersect at most $k$ times, so the number of vertices satisfies
$$v(G)\leq k {m\choose 2}=k\frac{m(m-1)}{2}.$$
Moreover, since the set of curves is filling and the surface has $n$ punctures, there are at least $n$ connected components of $S\setminus G$, i.e.\ $f(G)\geq n$. By computing the Euler characteristic of $S$ as $v(G)-e(G)+f(G)$ and using the estimates on $v(G)$ and $f(G)$ we obtain the desired lower bound.
\end{proof}

\begin{rmk}
Note that the lower bound of Theorem \ref{thm:lower_even} holds for odd $k$ as well, but, as we will show later, for $k$ odd we can get a better bound.
\end{rmk}

We begin with the case of the sphere - in this case we can show that the lower bound of Theorem \ref{thm:lower_even} is sharp.
\begin{theorem}\label{thm:spheres}
Let $S$ be a sphere with $n\geq 4$ punctures. There exists a  $k$-filling set of curves on $S$ of cardinality $N$, where $N$ is the smallest integer satisfying
$$N(N-1)\geq \frac{2n-4}{k}.$$
\end{theorem}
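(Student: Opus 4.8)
The plan is to reverse engineer a $k$-filling set on the $n$-punctured sphere, exactly as in the proof of Theorem~\ref{thm:torus}: first produce a configuration of simple closed curves on the sphere with no punctures, and then insert the $n$ punctures into its complementary regions. Since $|\chi(S)|=n-2$, the target inequality $N(N-1)\geq\frac{2n-4}{k}$ is equivalent to $\frac{k}{2}N(N-1)\geq n-2$, i.e.\ to the statement that a generic configuration of $N$ curves on the sphere pairwise meeting exactly $k$ times cuts it into at least $n$ disks, which is just enough to absorb $n$ punctures.

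For the model configuration, write $k=2q$ and realize the sphere as $\R^2\cup\{\infty\}$. I would take $\gamma_1,\dots,\gamma_N$ to be the curves given in polar coordinates by $r=2+\sin(q\theta+\phi_i)$, where $\phi_1,\dots,\phi_N$ are generic phases. Each $\gamma_i$ is an embedded simple closed curve contained in the annulus $1\leq r\leq 3$, and for $i\neq i'$ the equation $\sin(q\theta+\phi_i)=\sin(q\theta+\phi_{i'})$ has exactly $2q=k$ solutions in $[0,2\pi)$, all transverse; for generic phases no three curves meet at a common point and no two coincide. Hence $G:=\gamma_1\cup\dots\cup\gamma_N$ is a connected $4$-valent graph on the sphere with $v=k\binom{N}{2}=\frac{k}{2}N(N-1)$ vertices, $e=2v$ edges by the hand-shaking lemma, and therefore $f=2-v+e=\frac{k}{2}N(N-1)+2$ complementary faces, each an open disk. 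By minimality of $N$ one has $\frac{k}{2}N(N-1)\geq n-2$, so $f\geq n$.

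It then remains to choose $n$ of these faces, place one puncture in each, and check that $\{\gamma_1,\dots,\gamma_N\}$ becomes a genuine $k$-filling set on the resulting $n$-punctured sphere. Filling is automatic: the complement of $G$ is a union of disks, $n$ of which become once-punctured disks and the rest stay disks. For the set to be admissible the curves must also be essential, non-peripheral, and pairwise non-homotopic. On a punctured sphere a simple closed curve is determined up to homotopy by the partition of the punctures it induces, and it is essential and non-peripheral precisely when both of its complementary disks contain at least two punctures. Since for each $i$ one side of $\gamma_i$ contains the face through $\infty$ together with a ``lens'' face shared with some neighbouring $\gamma_{i'}$, the other side contains the face through $0$ together with another such lens, and any two distinct curves $\gamma_i,\gamma_{i'}$ cobound a lens lying exterior to one and interior to the other, one can distribute the $n$ punctures among $n$ faces so that all of these conditions hold; for the finitely many small values of $n$ (equivalently of $N$) one checks this directly, and for larger $n$ there is ample slack.

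The model configuration is the easy part. The point that requires care is this last step: showing that $n$ punctures — a number that may be only barely at most $f$ — can always be placed so that the $N$ curves remain pairwise non-homotopic, essential and non-peripheral, so that the construction realizes the lower bound of Theorem~\ref{thm:lower_even} on the nose rather than up to an additive constant. The delicate book-keeping is keeping the $N$ induced partitions of the puncture set pairwise distinct while simultaneously keeping two punctures on each side of every curve.
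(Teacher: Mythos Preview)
Your model configuration and Euler-characteristic count are essentially identical to the paper's: the paper takes graphs $f_s(x)=\sin(x+s\varepsilon)$, $s=0,\dots,N-1$, on the rectangle $[0,k\pi]\times[-1,1]$, glues the vertical sides to form a cylinder, and caps off with two disks; your polar curves $r=2+\sin(q\theta+\phi_i)$ are the same picture in different coordinates. So the overall strategy matches.

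The place where your argument remains incomplete is exactly the one you flag: distributing the $n$ punctures so that the curves become pairwise non-homotopic. You assert that ``one can distribute the $n$ punctures among $n$ faces so that all of these conditions hold'' and then defer to a finite check plus ``ample slack'', but that is the entire content of the step. Your lens heuristic, read literally, asks for one distinguishing puncture per unordered pair $\{\gamma_i,\gamma_{i'}\}$, i.e.\ on the order of $\binom{N}{2}$ punctures, and nothing you wrote establishes $n\geq\binom{N}{2}$. The paper closes this cleanly by exploiting the \emph{linear} order on the curves given by the phases: for each $s=0,\dots,N-2$ there is a face bounded only by $\gamma_s$ and $\gamma_{s+1}$ (no other curve enters it), and one puncture in each of these $N-1$ faces already forces the partitions induced by $\gamma_0,\dots,\gamma_{N-1}$ to be pairwise distinct. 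The needed inequality $n\geq N-1$ then drops out of the minimality of $N$ via $n>\tfrac{k(N-1)(N-2)+4}{2}$. Rewriting your last paragraph around the phase ordering, rather than around lenses between arbitrary pairs, would finish the proof with no appeal to slack or to a case check.
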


\begin{proof}
%
Fix $k$; we start by constructing the set of curves in the case in which
$$n=\frac{k N(N-1)+4}{2}$$
for some integer $N$.

Consider the rectangle $[0,k\pi]\times [-1,1]\subseteq \mathbb{R}^2$ and the graphs of the functions $f_s(x)=\sin(x+s\varepsilon)$ for $s\in\{0,1,\dots ,N-1\}$ and $\varepsilon$ small. Note that we can choose $\varepsilon$ small enough so that any two of the above graphs intersect exactly $k$ times and there are no triple intersections (as in Figure \ref{fig:sine}).
\begin{figure}[H]
\includegraphics{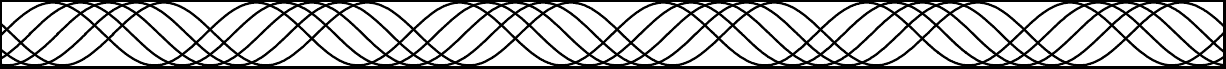}
\caption{The graphs of $f_0, f_1,f_2$ and $f_3$ on the rectangle $[0,12\pi]\times[-1,1]$}\label{fig:sine}
\end{figure}
 Consider the cylinder obtained by identifying $(0,t)$ with $(k\pi,t)$ for any $t\in[-1,1]$. On this cylinder, the graphs project to $N$ curves, all pairwise intersecting exactly $k$ times, with no three curves intersecting in the same point. We glue disks to the two boundary components of the cylinder to obtain a sphere. As in the proof of the lower bound, we consider the graph $G$ induced by the curves on the sphere. Again it is $4$-valent, so $e(G)=2v(G)$. Since all curves pairwise intersect exactly $k$-times and no three curves have a common intersection, we have
$$v(G)=k {N\choose 2}=k\frac{N(N-1)}{2}=n-2.$$
Since $2=\chi(\mathbb{S}^2)=v(G)-e(G)+f(G)$, the number of connected components of the complement of $G$ is
$$f(G)=2+v(G)=n.$$
So we add a puncture to each connected component. This gives a $n$-punctured sphere with a $k$-filling set of the desired size.

Now consider $n$ not of the form $\frac{k N(N-1)+4}{2}$ for any $N$. Then there exists an integer $N$ such that
\begin{equation}\label{eqn:boundN}
\frac{k (N-1)(N-2)+4}{2}<n<\frac{k N(N-1)+4}{2}.
\end{equation}
We construct a sphere with $N$ curves pairwise intersecting $k$ times as in the previous case. The difference is that this time we have less cusps than connected components. To be sure that no two curves are homotopic, it is enough to place a single puncture to separate the first curve from all of the other curves, then a puncture between the second and the subsequent curves and so on. Hence it is enough to have $n\geq N-1$ punctures and this inequality holds via the lower bound in \ref{eqn:boundN}. So again we obtain a filling set of curves of the right size.
\end{proof}

With the same techniques we can prove a similar statement for tori.

\begin{theorem}
Let $T$ be a torus with $n\geq 1$ punctures and $k$ be even. There exists a $k$-filling set of curves on $T$ of cardinality $N$, where $N$ is the smallest integer satisfying
$$N(N-1)\geq \frac{2n}{k}.$$
\end{theorem}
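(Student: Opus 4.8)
The plan is to mimic the construction for spheres in Theorem~\ref{thm:spheres}, but starting from a torus instead of a cylinder-capped-into-a-sphere. First I would handle the exact case, namely when $n = \frac{kN(N-1)}{2}$ for some integer $N$. As before, consider the rectangle $[0,k\pi]\times[-1,1]$ with the graphs of $f_s(x) = \sin(x + s\varepsilon)$ for $s \in \{0,1,\dots,N-1\}$ and $\varepsilon$ small enough that any two graphs meet exactly $k$ times with no triple intersections. Now, rather than capping off both boundary circles with disks, I would glue the top boundary circle $[0,k\pi]\times\{1\}$ to the bottom circle $[0,k\pi]\times\{-1\}$ (after the horizontal identification that produced the cylinder), obtaining a torus $T^0$. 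The graphs still project to $N$ curves, each a simple closed curve of the same isotopy class (all isotopic to the core of the cylinder, which is now one of the two generators of $\pi_1(T^0)$), pairwise intersecting exactly $k$ times with no triple points.

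Next I would run the Euler characteristic count. The curves induce a $4$-valent graph $G$ on $T^0$, so $e(G) = 2v(G)$, and since each pair meets exactly $k$ times with no triple intersections, $v(G) = k\binom{N}{2} = \frac{kN(N-1)}{2} = n$. Since $\chi(T^0) = 0 = v(G) - e(G) + f(G)$, we get $f(G) = v(G) = n$, so there are exactly $n$ complementary regions, and I would need to check these are all disks (true once the curves fill, which the sine-graph configuration guarantees for $N \geq 2$ exactly as in the sphere case). Placing one puncture in each region yields an $n$-punctured torus; one must verify the $N$ curves remain pairwise non-homotopic after puncturing, but this is automatic since any complementary disk is a bigon-free region and a puncture in a region adjacent to the boundary of a given curve obstructs homotopy across it, just as in the proof of Theorem~\ref{thm:spheres}.

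For the general case, when $n$ is not of the special form, I would pick the smallest integer $N$ with $\frac{kN(N-1)}{2} \geq n$, equivalently $N(N-1) \geq \frac{2n}{k}$; then $\frac{k(N-1)(N-2)}{2} < n \leq \frac{kN(N-1)}{2}$. Build the same configuration of $N$ curves on $T^0$, which has $\frac{kN(N-1)}{2} \geq n$ complementary disks. Distribute the $n$ punctures so that the resulting curves are pairwise non-homotopic: as in the sphere argument, it suffices to place a puncture separating the first curve from the rest, then one separating the second from the subsequent ones, and so on, requiring only $N-1$ punctures, and $n \geq N-1$ follows from the lower bound $\frac{k(N-1)(N-2)}{2} < n$ since $k \geq 2$ forces $\frac{k(N-1)(N-2)}{2} \geq (N-1)(N-2) \geq N-1$ for $N \geq 3$ (the cases $N = 1, 2$ being checked by hand). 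This produces a $k$-filling set of the claimed cardinality $N$.

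The main obstacle, such as it is, is bookkeeping rather than a genuine difficulty: one must be careful that gluing the cylinder into a torus does not accidentally create triple intersection points where the top and bottom graphs meet along the identified circle — but since all graphs cross the lines $y = \pm 1$ only tangentially (or can be perturbed to avoid them entirely by shrinking the vertical extent slightly), no intersections occur on the gluing circle, so the count of $v(G)$ is unaffected. The only other point requiring a line of justification is that the complementary regions are genuinely disks and not, say, annuli or punctured-once pieces before we add punctures; this follows because the $N$ sine-curves together with the meridian direction fill $T^0$, which one sees directly from the picture (every region is bounded by arcs of at least three curves and contains no essential loop).
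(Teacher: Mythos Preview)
Your proposal follows the paper's sketch exactly, and you rightly flag the one point that needs care: whether the complementary regions on the closed torus $T^0$ are all disks. Unfortunately your justification of this point fails, and in fact the sine curves do \emph{not} fill $T^0$. All $N$ of them lie in the same homology class $(1,0)\in H_1(T^0;\mathbb{Z})$; if their complement were a union of open disks, the map $H_1\bigl(\bigcup_s\gamma_s\bigr)\to H_1(T^0)$ would be surjective, which it is not. Concretely, after shrinking the vertical amplitude as you suggest, the glued core circle $\{y=\pm 1\}$ lies entirely in one complementary region and is non-contractible there; that region --- the merge of what on the sphere were the two disk caps --- is an annulus, not a disk. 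Your own phrase ``the $N$ sine-curves together with the meridian direction fill $T^0$'' concedes the issue: a curve in the transverse class is needed, and the meridian is not among your $N$ curves. The equation $f(G)=v(G)$ is likewise circular, since $v-e+f=\chi(T^0)$ already presupposes a cellular embedding.

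To be clear, you have faithfully reproduced the paper's approach, and its one-line proof does not address this point either. As written, the construction does not yield a filling set on $T^0$; some modification is required --- for instance, arranging at least one of the $N$ curves to lie in a different homology class while still meeting the others $k$ times, and then redoing the region count.
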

\begin{proof}
The proof is essentially the same as for spheres. The only difference is that instead of gluing two disks to turn the cylinder with the curves into a sphere, we glue its two boundary components to get a torus.
\end{proof}
To prove the result in the case of surfaces of genus at least two we combine the idea of the construction in the cases of spheres and tori and a known result about $k$-filling sets on closed surfaces from \cite{app}.
\begin{theorem}\label{thm:genus}
Let $S$ be a surface of signature $(g,n)$, with $g\geq 2$ and $n\geq 1$. For any even $k\geq 2$, there is a $k$-filling set on $S$ of size $N$ satisfying
$$\frac{5}{2}+\sqrt{\frac{1}{4}+\frac{2|\chi(S)|}{k}}\leq N<6+\sqrt{\frac{1}{4}+\frac{2|\chi(S)|}{k}}.$$
\end{theorem}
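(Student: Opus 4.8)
The plan is to build the $k$-filling set on a genus-$g$ surface with $n$ punctures by combining two ingredients: a known efficient $k$-filling set on a *closed* surface of some genus $h \le g$, coming from \cite{app}, and the "parallel copies of sine curves on a cylinder" trick used in Theorems \ref{thm:spheres} and the torus case to absorb the punctures and to boost the genus from $h$ up to $g$. First I would recall from \cite{app} that on a closed surface of genus $h$ there is a $k$-filling set whose cardinality is essentially $\sqrt{(4h-2)/k}$; this gives the $\sqrt{2|\chi|/k}$ leading term once we check that $|\chi(S)| = 2g-2+n$ and that the genus we actually need to produce synthetically is comparable to $g$ plus a contribution of order $n/k$. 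The idea is: take the closed model, cut along a small embedded disk disjoint from the curves, and glue in a cylinder carrying $N'$ parallel sine-type curves pairwise meeting $k$ times (as in Theorem \ref{thm:spheres}); this simultaneously raises the genus by $1$ and creates $\sim kN'(N'-1)/2$ new complementary regions into which we stuff the required punctures, at the cost of adding $N'$ curves to the set. Iterating, or doing it in one batch, lets us reach any target $(g,n)$.

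The key steps, in order, would be: (1) write $|\chi(S)| = 2g-2+n$ and set $N_0 \approx \sqrt{2|\chi(S)|/k}$ as the target leading order, so the theorem asks for $N$ between $N_0 + 5/2$ and $N_0 + 6$ roughly; (2) choose an auxiliary genus $h$ and a closed $k$-filling set $\{\alpha_1,\dots,\alpha_p\}$ on $\Sigma_h$ from \cite{app}, with $p$ slightly larger than $\sqrt{(4h-2)/k}$; (3) perform the cylinder-insertion construction to increase genus from $h$ to $g$ and to introduce enough complementary regions, adding some number $q$ of sine curves and checking via an Euler characteristic count (exactly as in the proofs above, using $4$-valence and $e=2v$, $v = k\binom{N}{2}$ up to the cross-terms between the two families) that $f(G) \ge n$; (4) add one puncture per region, using the "separating puncture" trick from Theorem \ref{thm:spheres} to guarantee no two curves in the final set are homotopic when there are fewer punctures than regions; (5) optimize the split between $h$ (hence $p$) and $q$ so that the total $N = p + q$ lands in the stated window, and finally verify the explicit constants $5/2$ and $6$ by bounding the ceilings/floors and the error terms in the square roots.

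The main obstacle I expect is bookkeeping the constants precisely enough to land strictly inside $[\,5/2 + \sqrt{1/4 + 2|\chi|/k}\,,\ 6 + \sqrt{1/4 + 2|\chi|/k}\,)$: the lower bound side is just Theorem \ref{thm:lower_even} (one checks the smallest integer $N$ with $N(N-1) \ge 2|\chi|/k$ is at least $\tfrac12 + \sqrt{\tfrac14 + 2|\chi|/k}$, and the construction cannot beat it, but we actually want a slightly weaker lower statement matching the construction, so really the lower inequality here is an *a priori* lower bound on the size of the set we construct, forced by Theorem \ref{thm:lower_even}), while the upper bound requires controlling the accumulated "+O(1)" losses — one from rounding $p$ up, one from rounding $q$ up, one from the cross-intersections between the two curve families producing slightly fewer usable regions than the naive $k\binom{N}{2}$, and one from possibly needing a couple of extra curves to realize genus $g$ exactly rather than $g-1$ or $g+1$. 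I would handle this by being generous (aiming for $N < \sqrt{1/4+2|\chi|/k} + C$ with some explicit $C \le 6$) rather than optimal, and by treating small values of $g$ and $n$ separately if the asymptotic estimate is not yet valid there. The genus-raising-via-cylinder step should be drawn carefully in a figure, since the claim that one cylinder insertion adds exactly $1$ to the genus while keeping the graph $4$-valent is the geometric heart of the argument.
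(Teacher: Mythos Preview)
Your proposal contains the right ingredients --- the closed-surface construction from \cite{app} plus the sine-curve cylinder --- but the way you combine them has a genuine gap, and it is more complicated than necessary.

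First, there is no need to start at a lower genus $h\le g$ and then ``raise the genus by cylinder insertion.'' The paper simply takes the closed surface $S^0$ of the \emph{same} genus $g$ (obtained by filling in the punctures), applies \cite{app} there to get a $k$-filling set of size $x$ or $x+1$ with $x(x-1)\ge(4g-2)/k$, and never changes the genus again. Your genus-raising step is both unnecessary and, as you describe it, topologically unclear: cutting out one embedded disk and gluing in a cylinder leaves a free boundary component, it does not produce a closed surface of higher genus.

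Second, and more seriously, even if you made the handle attachment precise, the sine curves sitting on a cylinder glued into a region \emph{disjoint from the old curves} would not intersect the original filling set at all. Then the total vertex count is not $k\binom{N}{2}$ but only $k\binom{p}{2}+k\binom{q}{2}$, the combined set need not fill, and your region estimate collapses. The paper avoids this by a different (and simpler) move: it picks one curve $\gamma$ from the \cite{app} construction that already meets every other curve exactly $k$ times, and replaces $\gamma$ by $y+1$ parallel sine-type copies inside a thin annular neighbourhood of $\gamma$. Each copy then still meets every other original curve $k$ times, and the copies meet each other $k$ times, so at least $x+y-2$ of the curves pairwise intersect exactly $k$ times and the region count $2-2g+k\binom{x+y-2}{2}\ge n$ goes through.

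Finally, the lower bound $\tfrac52+\sqrt{\tfrac14+2|\chi|/k}$ on the constructed $N$ does not come from Theorem~\ref{thm:lower_even}; that theorem only gives $\tfrac12+\sqrt{\tfrac14+2|\chi|/k}$. Both the lower and upper estimates on $x+y$ are extracted directly from the two defining inequalities for $x$ and $y$ (minimality of $x$ for the genus part and minimality of $y$ for the puncture part), together with the check that $n\ge y$ so that the parallel copies can be made pairwise non-homotopic.
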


\begin{proof} We construct a $k$-filling set of the desired size.\\
Consider the closed surface $S^0$ obtained by filling in the punctures. By a result in \cite{app}, we know that there exists an $k$-filling set $\mathcal{C}^0$ of cardinality $x$ or $x+1$, where $x$ is the smallest integer satisfying
\begin{equation}\label{eqn:boundx}
x(x-1)\geq \frac{4g-2}{k}.
\end{equation}
The construction in \cite{app} has most curves that pairwise intersect exactly $k$ times. In fact, the curves are constructed algorithmically and this property is true for all but (possibly) the final two curves in the construction. Pick a curve $\gamma$ from the construction that is not one of the final two and replace it with a thin cylinder with a set of $y+1$ curves (as in the construction of Theorem \ref{thm:spheres}). We obtain set of curves $\mathcal{C}$ of cardinality $x+y$ or $x+y+1$. Choose $y$ to be the smallest integer such that $S^0\setminus \mathcal{C}$ has at least $n$ connected components. Since at least $x+y-2$ curves of $\mathcal{C}$ pairwise intersect exactly $k$ times, the number of components of $S^0\setminus \mathcal{C}$ is at least
$$2-2g+k{x+y-2\choose 2}.$$
So we choose $y$ to be the smallest integer such that
\begin{equation}\label{eqn:boundy}
2-2g+k{x+y-2\choose 2}\geq n.
\end{equation}
Using the two inequalities \ref{eqn:boundx} and \ref{eqn:boundy}, one can compute that $x+y$ satisfies
$$\frac{5}{2}+\sqrt{\frac{1}{4}+\frac{2|\chi(S)|}{k}}\leq x+y<5+\sqrt{\frac{1}{4}+\frac{2|\chi(S)|}{k}}.$$
Set $N=|\mathcal{C}|$; we know that $N=x+y$ or $x+y+1$. Moreover, the complement of  $\mathcal{C}$ is a union of disks. We want to place at most one puncture per connected component. By construction, we have enough components. Also, all curves are pairwise non-homotopic, except possibly for the $y+1$ in the thin cylinder. To be sure these are pairwise non-homotopic, it is enough to have at least $y$ punctures, i.e.\ it is enough to have $n\geq y$. This is true if $y=0$ or $y=1$ (by assumption). If we have only two punctures, it is enough to have two curves in the cylinder, so $y\leq 1$. This means that if $y=2$ or $y=3$, we must have had $n\geq 3$. So if $y\leq 3$, the condition $n\geq y$ is satisfied.

Assume now $y\geq 4$. By the minimality of $y$, we know that
$$2-2g+k{x+y-3\choose 2}< n$$
which implies, using inequality \ref{eqn:boundx} and basic computations
$$n\geq 2+\frac{k}{2}(y-3)^2+\frac{k}{2}(y-3).$$
So it is enough to have
$$2+\frac{k}{2}(y-3)^2+\frac{k}{2}(y-3)\geq y,$$
which holds, under our assumption $y\geq 4$.

Thus we can add punctures in chosen connected components and we obtain a filling set of size $N$.
\end{proof}

\section{The topological setup: case $k$ odd}\label{odd}
In this section we will always assume $k$ to be an odd positive integer.

A {\it $k$-system} on a surface $S$ is a set of curves which pairwise intersect at most $k$-times. We set $M_g(k)$ to be the maximum cardinality of a $k$-system on a closed surface of genus $g$.

\begin{theorem}
Let $S$ be a surface of signature $(g,n)$, with $g\geq 1$ and $n\geq 1$. Then a $k$-filling set has cardinality at least $N$, where $N$ is the smallest integer satisfying
$$\frac{k}{2}N(N-1)-\frac{N}{2}\left(\frac{N}{M_g(k)}-1\right)\geq|\chi(S)|.$$
\end{theorem}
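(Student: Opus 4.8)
The plan is to re-run the Euler-characteristic count from the proof of Theorem~\ref{thm:lower_even}, but to replace the crude bound on the number of intersection points by a finer one that exploits the parity of $k$. Write $m$ for the cardinality of a $k$-filling set $\{\gamma_1,\dots,\gamma_m\}$ on $S$. In general position these curves cut out a $4$-valent graph $G$ on $S$ with $e(G)=2v(G)$ and $f(G)\ge n$, so that $v(G)=f(G)-\chi(S)\ge n-\chi(S)\ge|\chi(S)|$. Hence it suffices to prove
$$\sum_{i<j}\inter(\gamma_i,\gamma_j)=v(G)\le\frac k2 m(m-1)-\frac m2\Big(\frac m{M_g(k)}-1\Big);$$
since the right-hand side is increasing in $m$ for $m\ge1$, this forces $m\ge N$.

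The new input is a parity obstruction coming from the forgetful map $S\to S^0$ onto the closed genus-$g$ surface obtained by filling in the punctures. Recall that $\inter(\gamma_i,\gamma_j)\equiv[\gamma_i]\cdot[\gamma_j]\pmod2$ for the $\Z/2$-homological intersection pairing, and that the kernel of $H_1(S;\Z/2)\to H_1(S^0;\Z/2)$ — the span of the peripheral classes — is exactly the radical of this pairing. I would split the curves into two families: those $\gamma_i$ with $[\bar\gamma_i]=0$ in $H_1(S^0;\Z/2)$, which then meet every other curve an even number of times; and the remaining ones, whose images $\bar\gamma_i$ are essential on $S^0$. Since the forgetful map does not increase geometric intersection numbers, the distinct free homotopy classes occurring among the latter images form a $k$-system on $S^0$, hence number at most $M_g(k)$; grouping these curves accordingly, two curves in the same group have homotopic image, hence (being $\Z/2$-homologous on $S$ up to a peripheral class) meet an even number of times.

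Because $k$ is odd, every pair of curves that is forced to meet evenly contributes at most $k-1$, rather than $k$, to $v(G)$; and the only pairs not forced to meet evenly lie in two distinct groups of the second family. Writing $t$ for the size of the first family and $m_1,\dots,m_s$ (with $s\le M_g(k)$ and $t+\sum_\ell m_\ell=m$) for the sizes of the groups in the second, a short computation with binomial coefficients — bounding $\sum_\ell\binom{m_\ell}{2}$ from below by convexity and invoking $(2m-t)(1-1/M_g(k))\ge0$ — shows that the number of forced-even pairs is at least $\frac m2(\frac m{M_g(k)}-1)$, which turns $v(G)\le k\binom m2-(\#\text{forced-even pairs})$ into exactly the displayed bound.

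The step I expect to be the main obstacle is the parity argument itself: noticing that for odd $k$ the estimate $v(G)\le k\binom m2$ of the even case is genuinely wasteful, isolating the right mechanism — that two curves with freely homotopic (or null-homotopic) image on the capped surface necessarily meet an even number of times on $S$ — and then organising the pair-count so that it lands precisely on the constant $M_g(k)$ rather than something slightly weaker (this is where the separate treatment of the $\Z/2$-null-homologous curves matters). The remaining ingredients (the $4$-valent graph / Euler characteristic bookkeeping and the convexity estimate) are routine and parallel Section~\ref{even}.
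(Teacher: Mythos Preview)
Your argument is correct and follows essentially the same route as the paper: pass to the closed surface $S^0$ via the forgetful map, observe that curves with the same image class must meet evenly (so contribute at most $k-1$ to $v(G)$ when $k$ is odd), bound the number of image classes by $M_g(k)$, and optimise by convexity. Two small remarks. First, the line $v(G)=f(G)-\chi(S)$ is not quite right: since some faces are once-punctured disks, the correct identity is $v(G)-e(G)+f(G)=\chi(S^0)$, giving $v(G)=f(G)-\chi(S^0)\ge n-(2-2g)=|\chi(S)|$; your conclusion $v(G)\ge|\chi(S)|$ is unaffected. Second, the paper does not separate out the $\Z/2$-nullhomologous curves --- it simply groups everything by free homotopy class on $S^0$ and uses that freely homotopic curves meet evenly --- so your extra step with the first family is not strictly needed, though it does tidy up the edge case of curves that become inessential on $S^0$ (where the paper's claim $M\le M_g(k)$ is a touch casual).
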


\begin{proof}
Let $\Gamma=\{\gamma_1,\dots ,\gamma_N\}$ be a $k$-filling set on $S$; up to isotopy we can assume that there are no triple intersection points and all intersections are transverse. As in the proof of the lower bound in Theorem \ref{thm:torus}, we consider the associated surface $S^0$ obtained by forgetting the punctures and the forgetful map $\pi:S\rightarrow S^0$. Let $\delta_1,\dots,\delta_M$ be the isotopy classes in $\pi(\Gamma)$ and consider the families $\mathcal{F}_i=\pi^{-1}(\delta_i)$. Note that if two curves in $\Gamma$ belong to the same family $\mathcal{F}_i$, they are isotopic on $S^0$, so they can only have an even number of intersections. Since $k$ is odd, this means that they intersect at most $k-1$ times. Let $a_i$ be the cardinality of $\mathcal{F}_i$.

As in the proof of Theorem \ref{thm:lower_even}, we consider the graph $G$ induced  by $\Gamma$ on $S^0$. Again it is $4$-valent, thus $e(G)=2v(G)$ and $f(G)=\chi(S^0)+v(G)\geq n$. We have
$$
v(G)=\underbrace{\sum_{i=1}^M\left(\sum_{\alpha,\beta\in \mathcal{F}_i}|\alpha\cap\beta|\right)}_{\stackrel{\mbox{\small intersections between curves}}{\mbox{\small in the same family}}}+\underbrace{\sum_{i<j}\left(\sum_{\alpha\in\mathcal{F}_i,\beta\in \mathcal{F}_j}|\alpha\cap\beta|\right)}_{\stackrel{\mbox{\small intersections between curves}}{\mbox{\small in different families}}}
$$
By what we said before, the intersections $|\alpha\cap\beta|$ in the first sum are bounded by $k-1$ and the ones in the second sum simply by $k$. So
$$
v(G)\leq\sum_{i=1}^M\left(\sum_{\alpha,\beta\in \mathcal{F}_i}(k-1)\right)+\sum_{i<j}\left(\sum_{\alpha\in\mathcal{F}_i,\beta\in \mathcal{F}_j}k\right)
=(k-1)\sum_{i=1}^M{a_i\choose 2}+k\sum_{i<j}a_ia_j$$
By Lagrange, $(k-1)\sum_{i=1}^M{a_i\choose 2}+k\sum_{i<j}a_ia_j$ is maximized for $a_1=\dots a_M=\frac{N}{M}$. Using this and the fact that $M\leq M_g(k)$ we get
$$v(G)\leq \frac{k}{2}N(N-1)-\frac{N}{2}\left(\frac{N}{M_g(k)}-1\right).$$
Combining this estimate with $\chi(S^0)+v(G)\geq n$ we obtain our claim.
\end{proof}

\section{Systoles of punctured surfaces}\label{systoles}

In this section we prove bounds on how the minimum number of filling systoles grows in function of the number of punctures of a (finite area complete) hyperbolic surface. Our bounds will show that, like in the case of closed surfaces (see \cite{app}), the topological condition of intersecting at most once or twice is very far from the geometric condition of being systoles. 

\begin{theorem}\label{thm:systoles}
Let $S$ be a hyperbolic surface of signature $(g,n)$ and systole length $\ell$. If $\gamma_1,\dots,\gamma_M$ is a filling set of systoles, then
$$M\geq \frac{2\pi(2g-1)+\pi(n-2)}{4\ell}.$$
\end{theorem}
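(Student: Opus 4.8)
\section*{Proof plan for Theorem~\ref{thm:systoles}}

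The plan is to run an area comparison in the spirit of the closed-surface argument of \cite{app}. By Gauss--Bonnet, $\area(S)=2\pi|\chi(S)|=2\pi(2g-2+n)$, so it suffices to show $4M\ell\geq 2\pi(2g-2+n)-\pi n$. First I would put the filling set $\gamma_1,\dots,\gamma_M$ in minimal position, so that no three curves meet at a point and all intersections are transverse; the union is then a $4$-valent geodesic graph $G$ on $S$, whose complementary faces are (since the $\gamma_i$ fill) topological disks and once-punctured disks, with \emph{exactly} $n$ of the latter (one per puncture). Each edge of $G$ is a geodesic sub-arc of one of the $\gamma_i$, and every edge occurs, with multiplicity, in the boundaries of exactly two faces, so if $|\partial F|$ denotes the total length of the boundary of a face $F$ then
$$\sum_{F}|\partial F|=2\sum_{i=1}^{M}\operatorname{length}(\gamma_i)=2M\ell .$$

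The core of the argument is then a face-by-face comparison of area with boundary length. A disk face $F$ is simply connected and carries a hyperbolic metric with geodesic polygonal boundary, so the isoperimetric inequality for simply connected surfaces of curvature $-1$ gives $\area(F)\leq |\partial F|$. A once-punctured disk face $F$ contains a cusp: here I would remove from $F$ a standard embedded horoball neighborhood $C$ of that cusp (area at most $\pi$, equal to the length of its bounding horocycle), turning $F\smallsetminus C$ into an annulus, then cut this annulus open along a shortest geodesic arc joining its two boundary components to reduce to the simply connected case; bookkeeping the horocycle and cutting-arc lengths, which are controlled because the sides of $\partial F$ are geodesic systole arcs, yields an estimate of the shape $\area(F)\leq 2|\partial F|+\pi$, the additive $\pi$ being charged once to the puncture inside $F$. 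Summing over all faces, the $n$ once-punctured faces contribute total additive error at most $\pi n$, and the disk faces contribute none, so
$$2\pi(2g-2+n)=\area(S)=\sum_{F}\area(F)\leq 2\sum_{F}|\partial F|+\pi n=4M\ell+\pi n,$$
and rearranging gives $4M\ell\geq 2\pi(2g-1)+\pi(n-2)$, which is the claim.

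The main obstacle is exactly this per-face estimate, and in particular the treatment of the once-punctured faces: one must verify that the excised horoball and the cutting arc cost only a bounded amount \emph{per puncture} (rather than an amount growing with the face), so that the total additive error is $\pi n$ and not something larger; for the disk faces the isoperimetric inequality does everything cleanly. A secondary point to check carefully is that minimal position can be achieved without triple intersection points, and the edge-to-face incidence count that produces $\sum_F|\partial F|=2M\ell$.
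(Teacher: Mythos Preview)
Your overall area-versus-perimeter strategy matches the paper's, and the disk faces are handled identically; the divergence is entirely in the treatment of the once-punctured faces, which is exactly the obstacle you flag. As sketched, your horoball-excision step does not close: after removing a horoball with horocycle of length $h$ and slitting the annulus along an arc of length $d$, the isoperimetric inequality gives only $\area(F)\le|\partial F|+2h+2d$, and nothing in your hypotheses bounds $h$ and $d$ simultaneously --- taking the horoball small forces the slit to be long, while taking the maximal horoball in $F$ (so $d=0$) leaves $h$ comparable to $\area(F)$ itself, which is useless. The remark that ``the sides are systole arcs'' does not help here. The paper sidesteps the issue entirely by a change of metric: it replaces $S$ by the conformally equivalent hyperbolic cone metric $S'$ with cone angle~$\pi$ at each puncture (curves shorten by Schwarz--Pick, and $\area(S')=2\pi(2g-1)+\pi(n-2)$); each face then becomes a hyperbolic polygon with at most one order-$2$ cone point, and such a polygon is double-covered by an honest hyperbolic polygon to which the isoperimetric inequality applies directly. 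The factor $4$ in the theorem is then $2$ (each systole borders two faces) times $2$ (the doubling).

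It is worth noting that your direct-on-$S$ approach \emph{can} be salvaged, and in fact sharpened, by replacing the horoball excision with a calibration argument. Model the cusp of $F$ as $\infty$ in the upper half-plane with deck transformation $z\mapsto z+1$; the invariant $1$-form $\omega=dx/y$ descends to $F$, has $d\omega$ equal to the hyperbolic area form, and satisfies $|\omega|\le ds_{\mathrm{hyp}}$. Since $\int\omega\to 0$ along horocycles shrinking into the cusp, Stokes gives $\area(F)=\int_{\partial F}\omega\le|\partial F|$ with no additive error at all. Summing over all faces then yields $M\ge \pi|\chi(S)|/\ell$, which is strictly stronger than the stated bound.
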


\begin{rmk} Theorem \ref{thm:systoles}, together with the systole bounds in \cite{schmutz} and \cite{fp}, implies that if $g$ is fixed and $n$ goes to infinity, then $M\geq An$, for some constant $A$. If $n$ is fixed and $n$ goes to infinity, $M\geq B\frac{g}{\log g}$, for some constant $B$.
\end{rmk}

\begin{proof}
The main idea is to use the isoperimetric inequality of the hyperbolic plane. 

Consider a hyperbolic surface $S$ with $n$ punctures and its set of filling systoles $\gamma_1,\hdots,\gamma_M$ of length $\ell$. We begin by considering the unique hyperbolic metric $S'$ with cone angle $\pi$ in every puncture conformally equivalent to $S$ outside of the cone angle points. This surface is uniquely determined by the conformal structure of $S$ (see Troyanov \cite{troyanov}) and via the Pick-Schwartz inequality (see Troyanov \cite{troyanov2}) enjoys a certain number of properties. All closed curves on $S'$ are of length strictly less than the corresponding curves of $S$ and in particular
\begin{equation}\label{eqn:S'}
\ell_{S'}(\gamma_k) < \ell_{S}(\gamma_k)=\ell
\end{equation}
for all $k=1,\hdots,M$.

Because the curves $\gamma_k$, $k=1,\hdots,M$ fill $S$, they also fill $S'$. Cutting along the curves then produces a collection of polygons, each with at most one cone point in its interior. We want to apply the isoperimetric inequality of the hyperbolic plane to this set -- but the cone points are an obstruction. 

To get rid of this obstruction we perform the following covering operation on those with a cone point of angle $\pi$: a polygon with a cone point of angle $\pi$ is the quotient of a centrally symmetric polygon by an involution so we replace is by its double cover which is a genuine hyperbolic polygon. We now have a full collection of hyperbolic polygons $P_1,\hdots,P_p$. 

By the isoperimetric inequality the boundary lengths of the polygons satisfy 
$$
\sum_{k=1}^{p} \ell(\partial P_k) > \sqrt{\area(S')^2+4\pi\area(S')}>\area(S')=2\pi(2g-1)+\pi(n-2)
$$
We'll now look at how the sum above relates to the sum of the $\ell_S(\gamma_k)$s. Using inequality \ref{eqn:S'}, fact that each $\gamma_k$ contributes exactly twice to the length of the filling set and finally the fact that the length of a $\partial P_j$ may have been doubled, we have:

$$
\sum_{j=1}^{p} \ell(\partial P_j) \leq 4 \sum_{k=1}^{M} \ell_S (\gamma_k)=4M\ell.
$$
Putting the two inequalities above together gives the result. 
\end{proof}

Actually, the growth of the lower bound in Theorem \ref{thm:systoles} is roughly correct. Indeed, we can construct families of surfaces with a filling set of systole growing linearly in $g+n$.

The first example is the family of surfaces $\{S_{g,n(g)}\}_{g\geq 2}$ constructed in Lemma $3.5$ of \cite{fp}. For every $g\geq 2$, $S_{g,n(g)}$ has genus $g$, $n(g)=46(g-1)$ cusps and an ideal triangulation where all but one vertex have degree $6$ and the remaining vertex has degree $12g-6$. Systoles correspond to edges between two vertices of degree $6$, so one can show that there are $36g-54$ systoles. As these correspond to all edges of the triangulation, except the ones incident to a single vertex, they fill. Moreover, an explicit computation shows that the length of a systole is precisely $\arccosh 3$ (and thus independent of $g$). Note that Theorem \ref{thm:systoles} gives, for these surfaces, that a set of filling systoles on $S_{g,n(g)}$ must have at least
$$\frac{25\pi}{2\arccosh 3}(g-1)$$
so the construction gives surfaces with less than twice the necessary amount of curves. 

The second example is a family of spheres with a filling set of systoles of cardinality equal to the set of punctures.
\begin{prop}
For any $n\geq 4$, there is a $n$-punctured sphere with a set of filling systoles of cardinality $n$.
\end{prop}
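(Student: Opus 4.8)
The idea is to find, for each $n$, a hyperbolic structure on the $n$-punctured sphere $S_{0,n}$ with enough symmetry that an entire orbit of curves, each surrounding a pair of adjacent cusps, becomes simultaneously shortest; everything except this last point is then topological bookkeeping.

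First I would fix the metric. Take $S_{0,n}$ to carry the hyperbolic structure invariant under a cyclic (indeed dihedral) group permuting the cusps --- for instance the one obtained as the $\Z/n$-cover of the rigid hyperbolic orbifold $S^2(n,n,\infty)$ that unwraps the two cone points (this orbifold is hyperbolic precisely because $\tfrac1n+\tfrac1n<1$), equivalently the point of maximal symmetry in $\mathcal T(S_{0,n})$. Label the cusps $c_1,\dots,c_n$ in the cyclic order preserved by the $\Z/n$-action and let $\delta_i$ be the simple closed geodesic isotopic to the boundary of a small round neighbourhood of $c_i\cup c_{i+1}$, indices mod $n$. For $n\ge 5$ the curves $\delta_i$ are pairwise non-isotopic and form a single $\Z/n$-orbit, hence share a common length $\ell_0$; the smallest value $n=4$ would need a separate argument.

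Next I would check that $\Gamma=\{\delta_1,\dots,\delta_n\}$ fills. Non-adjacent $\delta_i$ can be made disjoint and adjacent ones meet in exactly two points, so $\bigcup_i\delta_i$ is a connected $4$-valent graph $G$ on the sphere with $v(G)=2n$ vertices; hence $e(G)=4n$ and, by Euler's formula, $f(G)=2n+2$ complementary disks. Since $c_{i+1}$ lies in the bigon cut out by $\delta_i$ and $\delta_{i+1}$, every complementary region is a disk or a once-punctured disk, so $\Gamma$ fills --- exactly as in the proof of Theorem~\ref{thm:spheres} --- and filling is an isotopy invariant, so the geodesic representatives fill as well.

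The heart of the matter, and the step I expect to be the main obstacle, is to show that $\ell_0$ equals the systole length, i.e.\ that no closed geodesic of $S_{0,n}$ is shorter than the $\delta_i$. By the $\Z/n$-symmetry it suffices to bound from below the lengths of the curves enclosing $j\ge 3$ cusps and of the curves enclosing two non-adjacent cusps, and to check that each of these is strictly longer than $\delta_i$. I would carry this out in the explicit combinatorial model in which $S_{0,n}$ is tiled by $2n$ isometric copies of the ideal triangle with angles $(\pi/n,\pi/n,0)$: there $\ell_0$ is computed by an elementary hyperbolic-trigonometric formula, and the comparison reduces to a monotonicity statement --- roughly, ``enclosing more, or farther-apart, cusps forces a longer geodesic'' --- which one establishes either by the same bookkeeping or by a collar/convexity argument. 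Granting this, all $n$ of the $\delta_i$ are systoles and they fill, which proves the proposition.
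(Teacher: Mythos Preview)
Your construction coincides with the paper's: the $\Z/n$–cover of $S^{2}(n,n,\infty)$ unwinding both cone points is exactly the double of a regular ideal $n$–gon along its boundary, and your $\delta_i$ is the closed geodesic obtained by doubling the common perpendicular between two sides of the $n$–gon separated by a single side. Your Euler–characteristic filling count is correct and in fact more explicit than what the paper writes.

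The gap is in the systole step. The reduction ``by the $\Z/n$–symmetry it suffices to bound from below the lengths of the curves enclosing $j\ge 3$ cusps and of the curves enclosing two non-adjacent cusps'' does not yield a finite check: for each subset of cusps there are infinitely many isotopy classes of simple closed curves enclosing exactly that subset (the pure mapping class group of $S_{0,n}$ is infinite for $n\ge 5$), and a finite isometry group cannot collapse them to finitely many lengths. A monotonicity of the form ``enclosing more, or farther apart, cusps forces a longer geodesic'' is not available for arbitrary isotopy classes, so your outline does not close as stated. The paper avoids enumerating curves altogether: in the doubled $n$–gon model, every essential closed geodesic must at some point traverse an $n$–gon from one side to a non-consecutive side (otherwise it would be contractible or peripheral), hence contains two arcs each of length at least the common-perpendicular distance $d_2$ between such a pair of sides, where one computes $\cosh(d_2/2)=2\cos(\pi/n)$. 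This single uniform estimate gives $\ell(\gamma)\ge 2d_2=\ell_0$ for every essential closed geodesic $\gamma$, with equality exactly for the $\delta_i$. That is the missing idea; once you reassemble your $2n$ triangles with angles $(\pi/n,\pi/n,0)$ into two ideal $n$–gons, this argument drops out directly.
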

\begin{proof}
Consider an ideal maximally symmetric $n$-gon in the hyperbolic plane. In the Poincar\'e disk model, we can think of it as the $n$-gon with ideal vertices $v_k=e^{i\frac{2\pi k}{n}}$, for $k$ from $0$ to $n-1$. Take two copies of it and glue them such that the endpoints of orthogonals between two non-consecutive sides are identified. In particular this means that the orthogonals give simple closed geodesics on the sphere. We will show that these are the only systoles. Since there are $n$ of these curves and they fill the surface, this concludes the proof.

Consider the center of the polygon (in the Poincar\'e disk model this is the origin). To compute its distance $d$ to any side, consider the right-angled triangle given by the orthogonal from the center to a side, the geodesic from the center to one of the two vertices of the side and the part of the side from the vertex to the foot of the orthogonal. By hyperbolic trigonometry, the distance $d$ satisfies
$$\cosh d=\frac{1}{\sin\frac{\pi}{n}}.$$

\begin{figure}[H]
\begin{center}
\begin{overpic}[scale=1]{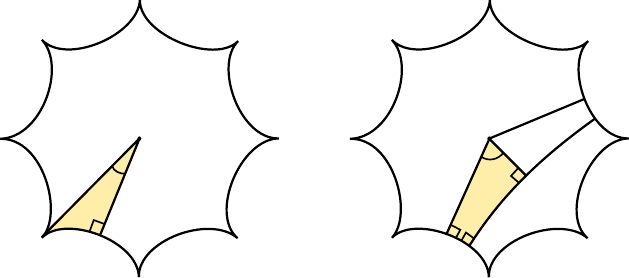}
\put (20,10) {$d$}
\put (17,24) {$\frac{\pi}{n}$}
\put (69,13) {$d$}
\put (84,12) {$d_k$}
\put (73,26) {$\frac{\pi k}{n}$}
\end{overpic}
\end{center}
\caption{Computing $d$ and $d_k$}
\end{figure}

Consider now two non-consecutive sides $a$ and $b$. Suppose $k-1$ is the minimum number of sides between them. Then the smallest angle between the two orthogonals from the center to $a$ and $b$ is $\frac{2\pi k}{n}$. These two orthogonals and the common orthogonal $d_k$ between $a$ and $b$ determine a pentagon with four right angles and and a $\frac{2\pi k}{n}$ angle. By taking the orthogonal from the center to $d_k$, we cut the pentagon into two quadrilaterals with three right angles and a $\frac{\pi k}{n}$ angle. By hyperbolic trigonometry, we find that
$$\cosh\frac{d_k}{2}=\cosh d\sin\frac{\pi k}{n}.$$
In particular, if $k>k'$, $d_k>d_{k'}$ and two non-consecutive sides which are adjacent to the same side are closest to each other than any other two.

Consider now any simple closed geodesic on the surface. It cannot be contained in one of the two polygons, otherwise it would be contractible. So it needs to cross two sides. It cannot cross only two consecutive sides, otherwise it would be homotopic to a cusp. Hence it needs to cross two non-consecutive sides, so it contains at least two arcs of length at least $d_2$. Moreover, it is of length exactly $2d_2$ only if it is given by exactly two orthogonal between sides adjacent to the same side.

As such the curves we are considering are the only systoles.
\end{proof}

\bibliographystyle{alpha}
\bibliography{references}
\end{document}